\newtheorem{theorem}{Theorem}[section]
\newtheorem{lemma}[theorem]{Lemma}
\newtheorem{corollary}[theorem]{Corollary}
\theoremstyle{definition}
\newtheorem{definition}[theorem]{Definition}
\newcommand{\N}{\mathbb{N}}
\newcommand{\Z}{\mathbb{Z}}
\newcommand{\R}{\mathbb{R}}
\DeclareMathOperator{\diam}{diam}
\DeclareMathOperator{\Rips}{\mathcal{VR}}
\newcommand{\lk}{\operatorname{lk}}
\newcommand{\st}{\operatorname{st}}
\newcommand{\dst}{\st^\downarrow\!}
\newcommand{\dlk}{\lk^\downarrow\!}
\newcommand{\dflk}{\lk^\downarrow_\partial}
\newcommand{\dclk}{\lk^\downarrow_\delta}
\DeclareMathOperator{\F}{F}
\numberwithin{equation}{section}
\begin{document}

\title{Contractible Vietoris--Rips complexes of $\mathbb{Z}^n$}
\date{\today}
\subjclass[2020]{Primary 20F65;   % ggt;
                 Secondary 57M07} %top methods in gp thry

\keywords{Vietoris--Rips complex, contractible, discrete Morse theory}

\author[M.~C.~B.~Zaremsky]{Matthew C.~B.~Zaremsky}
\address{Department of Mathematics and Statistics, University at Albany (SUNY), Albany, NY}
\email{mzaremsky@albany.edu}

\begin{abstract}
We give a new, short proof of a result of Virk, that the Vietoris--Rips complex of the group $\Z^n$ with the standard word metric is contractible at large enough scales. This is inspired by a key observation in Virk's proof, but we use Bestvina--Brady discrete Morse theory to get a very short proof with better bounds. In the course of this, we get a new, general criterion for a metric space to have contractible Vietoris--Rips complexes at large enough scales, which could prove useful in the future.
\end{abstract}

\maketitle
\thispagestyle{empty}

\section{Introduction}

For a metric space $X$ and $t\in\R\cup\{\infty\}$, the \emph{Vietoris--Rips complex} $\Rips_t(X)$ is the simplicial flag complex with vertex set $X$ such that vertices span a simplex whenever they are pairwise within distance $t$ of each other. It is an interesting problem in geometric group theory to try and determine which finitely generated groups $G$, viewed as metric spaces via word metrics coming from finite generating sets, admit contractible $\Rips_t(G)$ for $t<\infty$. For example, if $G$ is hyperbolic then this holds by a result of Rips, as explained in \cite[III.$\Gamma$.3.23]{bridson99}. Having a contractible Vietoris--Rips complex has several implications, for example the group is consequently finite presented, and even of type $\F_\infty$; see \cite{zaremsky22} for more in this direction. Beyond the hyperbolic case, contractibility is quite hard to prove. For the very basic group $\Z^n$, with the standard word metric, the contractibility question was posed by the author in the paper \cite{zaremsky22}, the first version of which is from 2018, but despite much effort by many people over the years, a proof remained elusive until just recently. In \cite{virk25}, \v Ziga Virk finally proved that $\Rips_t(\Z^n)$ is contractible for all suitably large $t$.

The primary purpose of this note is to give a new, short proof of Virk's result, using Bestvina--Brady discrete Morse theory. This is a topological tool for analyzing simplicial complexes, first introduced by Bestvina and Brady in \cite{bestvina97}. In \cite{zaremsky22}, the author established an approach to understanding Vietoris--Rips complexes using Bestvina--Brady Morse theory, but the sufficient conditions for contractibility given in \cite{zaremsky22} (for example see \cite[Theorem~3.5]{zaremsky22}) seem to be too difficult to verify for $\Z^n$. Here we prove a new sufficient condition, in Theorem~\ref{thrm:cible}, where the main requirement is that every finite subset with sufficiently large diameter lies in a ball of radius strictly smaller than this diameter. One of the crucial insights in \cite{virk25} is that this applies to $\Z^n$. We prove in Corollary~\ref{cor:Zn} that $\Z^n$ also satisfies the other condition required for Theorem~\ref{thrm:cible}, and so $\Rips_t(\Z^n)$ is contractible for all $t\ge n^2+n-1$. This is a slight improvement over the cubic bound in \cite{virk25}, and we conjecture that $\Rips_t(\Z^n)$ is contractible for all $t\ge n$. (It is easy to see that this bound is optimal.)

A secondary purpose of this note is to give a short summary of the machinery from \cite{zaremsky22} for applying Bestvina--Brady Morse theory to Vietoris--Rips complexes. The paper \cite{zaremsky22} is quite long and technical, but the basic idea is simple enough that we can explain it in about a page and half here, in Section~\ref{sec:background}.

\subsection*{Acknowledgments} Many thanks to \v Ziga Virk for explaining his proof in \cite{virk25} to me. I am also grateful to the many, many people who I talked to over the years trying to prove this result, especially Henry Adams and Brendan Mallery. Thanks are also due to Henry Adams, Gabriel Minian, and Carl Ye for pointing out some errors in a previous version, and to the referee for many helpful suggestions.

%-------------------------------------------------
\section{Vietoris--Rips complexes and Bestvina--Brady discrete Morse theory}\label{sec:background}

It will be convenient for us to work with the barycentric subdivision of $\Rips_t(X)$, that is, the geometric realization of the poset of its simplices, which is homeomorphic to $\Rips_t(X)$. For simplicity we will not change the notation, so from now on when we write $\Rips_t(X)$, we mean the geometric realization of the poset of all finite non-empty subsets $S\subseteq X$ with $\diam(S)\le t$. Thus, a $k$-simplex in $\Rips_t(X)$ is a chain $S_0\subsetneq \cdots\subsetneq S_k$ of finite non-empty subsets of $X$ with $\diam(S_i)\le t$ for all $i$. Note that $S$ now denotes both a subset of $X$ and also the corresponding vertex of $\Rips_t(X)$, and both viewpoints will be important; we will be careful to avoid potential confusion.

In \cite{zaremsky22}, the author set up a general version of Bestvina--Brady-style discrete Morse theory, for applications to Vietoris--Rips complexes. More precisely, see \cite[Definition~1.1]{zaremsky22} for the general definition of Morse function and \cite[Lemma~1.8]{zaremsky22} for the general Morse Lemma. Here we are only concerned with certain metric spaces, so we can afford to set up an easier version of discrete Morse theory.

\begin{definition}[Morse function]\label{def:morse}
Let $K$ be a simplicial complex. A map $h\colon K\to\R$ is a \emph{(discrete) Morse function} if the image $h(K^{(0)})$ is a discrete, closed subset of $\R$, and the restriction of $h$ to any simplex takes distinct values on the vertices of the simplex.
\end{definition}

In the original Bestvina--Brady paper \cite{bestvina97}, $K$ can be any affine cell complex, not just a simplicial complex, and the last condition is instead that $h$ is non-constant on edges and affine on cells. In \cite[Definition~1.1]{zaremsky22}, this is further weakened, essentially to not require $h$ to be discrete on vertices, but rather to just not have any $h$-values of vertices accumulate downward. The phrasing is quite different in \cite{zaremsky22}, but for our purposes here, Definition~\ref{def:morse} is enough.

\begin{definition}[Star/link, descending star/link]
Let $K$ be a simplicial complex. For a vertex $v$ of $K$, the \emph{star} of $v$ is the subcomplex $\st(v)$ of $K$ consisting of all simplices containing $v$ together with their faces. The \emph{link} of $v$ is the subcomplex $\lk(v)$ of $\st(v)$ consisting of all simplices in the star not containing $v$. Now let $h\colon K\to\R$ be a Morse function. The \emph{descending star} of $v$ is the subcomplex $\dst(v)$ of $\st(v)$ consisting of all simplices containing $v$ as their vertex with maximum $h$ value together with their faces. The \emph{descending link} of $v$ is the subcomplex $\dlk(v)$ of $\dst(v)$ consisting of all simplices in the descending star not containing $v$.
\end{definition}

Given a Morse function $h\colon K\to \R$ and a value $t\in\R$, let $K^{h\le t}$ be the full subcomplex of $K$ spanned by all vertices $v$ with $h(v)\le t$.

\begin{lemma}[Morse Lemma]\label{lem:morse}
Let $h\colon K\to\R$ be a Morse function and let $s<t$ in $\R$. If $\dlk(v)$ is contractible for all vertices $v$ with $s<h(v)\le t$, then the inclusion $K^{h\le s}\to K^{h\le t}$ is a homotopy equivalence. If $\dlk(v)$ is contractible for all vertices $v$ with $h(v)>t$, then the inclusion $K^{h\le t}\to K$ is a homotopy equivalence.
\end{lemma}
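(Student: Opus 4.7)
The plan is to build $K^{h\le t}$ from $K^{h\le s}$ (and then $K$ from $K^{h\le t}$) by attaching vertices one at a time in non-decreasing order of $h$-value, showing that each individual attachment is a homotopy equivalence because it amounts to gluing a contractible cone along a contractible base.

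First I would use the assumption that $h(K^{(0)})$ is discrete and closed in $\R$ to enumerate the vertices $v_1,v_2,\dots$ with $s<h(v_i)\le t$ in non-decreasing order of $h$-value; the Morse condition, that $h$ is injective on every simplex, guarantees that vertices sharing an $h$-value cannot lie in a common simplex, so the order chosen within a single level is immaterial. Set $K_0 := K^{h\le s}$, and let $K_n$ be the full subcomplex of $K$ on the vertex set $K_{n-1}^{(0)}\cup\{v_n\}$; then $K^{h\le t}=\bigcup_n K_n$.

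The key step is to identify $K_n$ as a pushout. Any simplex of $K_n$ not already in $K_{n-1}$ must contain $v_n$, and by the ordering all its other vertices have strictly smaller $h$-value, so $v_n$ achieves the maximum $h$-value on it, placing the simplex in $\dst(v_n)$. This gives
\[
K_n \;=\; K_{n-1}\cup_{\dlk(v_n)}\dst(v_n).
\]
Now $\dst(v_n)=\{v_n\}*\dlk(v_n)$ is the simplicial cone on $\dlk(v_n)$, hence contractible, and $\dlk(v_n)$ is contractible by hypothesis since $s<h(v_n)\le t$. The inclusion $\dlk(v_n)\hookrightarrow\dst(v_n)$ is therefore a cofibration between contractible complexes, and the gluing lemma for cofibrations forces the pushout map $K_{n-1}\hookrightarrow K_n$ to be a homotopy equivalence. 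Composing these equivalences along the filtration — taking a mapping telescope if the enumeration is infinite, so that the sequential colimit of cofibration homotopy equivalences is itself a homotopy equivalence — yields the first assertion. The second assertion is proved identically, now enumerating all vertices $v$ with $h(v)>t$.

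The one step I would flag as a mild obstacle is the cardinality bookkeeping at a single $h$-level: a sequential enumeration requires countability within each fibre of $h$. If some level is uncountable, the same argument still goes through either by transfinite induction or by attaching the entire level at once as a disjoint union of cones on contractible subcomplexes (using that vertices at the same level share no simplex, so these cones meet $K_{n-1}$ disjointly). For the paper's target $\Z^n$ everything in sight is countable, so this complication is purely formal and I do not expect any genuine obstruction.
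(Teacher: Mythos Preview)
Your argument is correct and is the standard Bestvina--Brady filtration proof: attach vertices in order of increasing $h$-value, observe that each attachment is the pushout of a cone $\dst(v)$ along its contractible base $\dlk(v)$, and pass to the colimit. The paper takes a different, terser route: it simply cites \cite[Corollary~1.11]{zaremsky22} for the statement that the inclusion induces isomorphisms on all homotopy groups, and then invokes Whitehead's theorem to upgrade the weak equivalence to a genuine homotopy equivalence. Your version has the virtue of being self-contained and of making the geometric mechanism visible; the paper's version avoids the bookkeeping you flag (uncountably many vertices at a level, the telescope argument for an infinite filtration) by deferring it to the reference. One small simplification you could make: since $h(K^{(0)})$ is closed and discrete in $\R$, there are only finitely many $h$-levels in $(s,t]$, so the first statement never needs a colimit over levels at all---only the per-level attachment (which, as you note, is a disjoint union of cones on contractible links) requires any care about cardinality.
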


\begin{proof}
By \cite[Corollary~1.11]{zaremsky22}, the inclusion induces an isomorphism in all homotopy groups, so the result is immediate from the Whitehead theorem.
\end{proof}

\medskip

Now let us discuss how to apply Bestvina--Brady Morse theory to Vietoris--Rips complexes. Let $X$ be a metric space. From now on we will assume the following property (*), which in particular holds whenever $X$ is a finitely generated group with a word metric coming from a finite generating set.

\medskip

\noindent\textbf{(*)}: All distances in $X$ are integers, and for all $t\in\N$ there exists $n_t\in\N$ such that $|S|\le n_t$ for all $S\subseteq X$ with $\diam(S)=t$.

\medskip

In all that follows, we fix some choice of $n_t$ as in (*), for each $t\in\N$.

The complex $\Rips_\infty(X)$ is the geometric realization of the poset of finite non-empty subsets of $X$. This poset is directed, so $\Rips_\infty(X)$ is contractible. Let $h\colon \Rips_\infty(X) \to \R$ be the map that sends $S$, viewed as a vertex of $\Rips_\infty(X)$, to
\[
h(S)\coloneqq \diam(S) - \frac{|S|-1}{n_{\diam(S)}}\text{,}
\]
where the measurements $\diam(S)$ and $|S|$ consider $S$ as a subset of $X$. This defines $h$ on the 0-skeleton of $\Rips_\infty(X)$, and we extend it affinely to each simplex to get a map on $\Rips_\infty(X)$. Note that $h(S)\in (\diam(S)-1,\diam(S)]$ for all $S$. In particular since all distances in $X$ are integers, for any $t\in\N$ we have that $h(S)\le t$ if and only if $\diam(S)\le t$, so $\Rips_t(X)=\Rips_\infty(X)^{h\le t}$. Note that $h$ is a Morse function as in Definition~\ref{def:morse}, since (*) ensures that the set of $h$ values of vertices is discrete and closed in $\R$, and $h$ takes distinct values on the vertices of any simplex. As a remark, we could relax the condition in (*) requiring that all distances are integers to just requiring the set of distances to be closed and discrete in $\R$, and adjust the definition of $h$ appropriately. Since our main concern here is $\Z^n$, for the sake of simplicity we will not expound on this here.

\begin{corollary}\label{cor:morse_rips}
Let $X$ and $h$ be as above and let $s<t$ in $\N$. Suppose that for all finite non-empty $S\subseteq X$ with $s<\diam(S)\le t$, the descending link $\dlk(S)$ with respect to $h$ is contractible. Then the inclusion $\Rips_s(X)\to\Rips_t(X)$ is a homotopy equivalence. Moreover, for any $t\in\N$, if for all finite non-empty $S\subseteq X$ with $\diam(S)>t$ the descending link $\dlk(S)$ with respect to $h$ is contractible, then $\Rips_t(X)$ is contractible.
\end{corollary}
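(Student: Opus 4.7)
The plan is to apply the Morse Lemma (Lemma~\ref{lem:morse}) directly, taking $K=\Rips_\infty(X)$ with the Morse function $h$ defined above. The only thing that needs to be checked is that the hypothesis on descending links phrased in terms of $\diam(S)$ translates into the hypothesis phrased in terms of $h(S)$ required by Lemma~\ref{lem:morse}, and that $\Rips_s(X)$, $\Rips_t(X)$ appear as sublevel sets $K^{h\le s}$, $K^{h\le t}$.

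The first step is to recall from the setup that $h(S)\in(\diam(S)-1,\diam(S)]$ and that all distances (hence all diameters of finite subsets) are integers. From this it follows immediately that for any $u\in\N$ we have $h(S)\le u\iff\diam(S)\le u$, and consequently also $h(S)>u\iff\diam(S)>u$. In particular $\Rips_u(X)=\Rips_\infty(X)^{h\le u}$, and the condition ``$s<h(S)\le t$'' for vertices $S$ coincides with ``$s<\diam(S)\le t$''. Thus the hypothesis of the corollary is exactly the hypothesis of Lemma~\ref{lem:morse} applied to $K=\Rips_\infty(X)$ and the given $s,t$, which yields the first conclusion.

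For the second part, apply the second statement of Lemma~\ref{lem:morse} to get that $\Rips_t(X)=\Rips_\infty(X)^{h\le t}\hookrightarrow \Rips_\infty(X)$ is a homotopy equivalence. To conclude, I need that $\Rips_\infty(X)$ itself is contractible; this is the observation made just before the definition of $h$ in the excerpt, namely that $\Rips_\infty(X)$ is the geometric realization of the directed poset of all finite non-empty subsets of $X$ (directed because any two finite subsets are contained in their union), hence is contractible.

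There is no real obstacle: once one notices the equivalence ``$h(S)\le u\iff\diam(S)\le u$'' for integer $u$, the corollary is a one-line consequence of Lemma~\ref{lem:morse} plus the directedness of the poset underlying $\Rips_\infty(X)$. The only thing worth writing carefully is the integrality argument, since without it the critical values of $h$ would not align nicely with the thresholds defining the filtration $\{\Rips_t(X)\}_{t\in\N}$.
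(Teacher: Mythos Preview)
Your proposal is correct and follows essentially the same approach as the paper's proof: identify $\Rips_u(X)$ with the sublevel set $\Rips_\infty(X)^{h\le u}$ via the equivalence $h(S)\le u\iff\diam(S)\le u$ for integer $u$, then invoke Lemma~\ref{lem:morse} (together with the contractibility of $\Rips_\infty(X)$ for the second part). The paper's proof is terser but the argument is the same.
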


\begin{proof}
We have $\Rips_t(X)=\Rips_\infty(X)^{h\le t}$ for all $t$, and $\diam(S)>t$ if and only if $h(S)>t$, so this follows from Lemma~\ref{lem:morse}.
\end{proof}

The descending link of $S$ is described as follows. First, the link of $S$ in $\Rips_\infty(X)$ is the full subcomplex spanned by all $S'$ satisfying $S'\subsetneq S$ and all $\widetilde{S}$ satisfying $\widetilde{S}\supsetneq S$. By the construction of $h$, the descending link is the full subcomplex spanned by all such $S'$ satisfying $\diam(S')<\diam(S)$ and all such $\widetilde{S}$ satisfying $\diam(\widetilde{S})=\diam(S)$. Of course $S'\subseteq \widetilde{S}$ for all $S'$ and $\widetilde{S}$ as above, so $\dlk(S)$ decomposes as the join of the full subcomplex spanned by all such $S'$ with the full subcomplex spanned by all such $\widetilde{S}$.

\begin{definition}[Descending (co)face link]
The \emph{descending face link} $\dflk(S)$ of $S$ is the full subcomplex of $\dlk(S)$ spanned by all $S'$ satisfying $S'\subsetneq S$ and $\diam(S')<\diam(S)$, and the \emph{descending coface link} $\dclk(S)$ of $S$ is the full subcomplex of $\dlk(S)$ spanned by all $\widetilde{S}$ satisfying $\widetilde{S}\supsetneq S$ and $\diam(\widetilde{S})=\diam(S)$. We have that $\dlk(S)$ is the join of $\dflk(S)$ with $\dclk(S)$.
\end{definition}

In particular, if either of $\dflk(S)$ or $\dclk(S)$ is contractible, then $\dlk(S)$ is contractible.

%-------------------------------------------------
\section{Contractible Vietoris--Rips complexes}\label{sec:cible}

\begin{theorem}\label{thrm:cible}
Let $(X,d)$ be a metric space satisfying \textnormal{(*)}. Fix $t_0$, and suppose that for all $t>t_0$ there exists $r_t<t$ such that every $S\subseteq X$ with $\diam(S)=t$ lies in some ball of radius $r_t$. Assume moreover that for some such ball, its center lies within $t$ of the center of any other such ball. Then $\Rips_{t_0}(X)$ is contractible (as is $\Rips_t(X)$ for all $t>t_0$).
\end{theorem}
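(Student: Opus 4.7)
The plan is to apply Corollary~\ref{cor:morse_rips}: it suffices to prove that $\dlk(S)$ is contractible for every finite non-empty $S \subseteq X$ with $\diam(S) > t_0$ (this gives $\Rips_{t_0}(X)$ contractible, and the statement for $t > t_0$ then follows from the first half of the corollary). Fix such an $S$, set $t := \diam(S)$, and observe that since $\dlk(S) = \dflk(S) * \dclk(S)$, it is enough to show $\dclk(S)$ is contractible. Unwinding definitions, $\dclk(S)$ is canonically isomorphic to $\Rips_t(Y_S)$, where
\[
Y_S := \{\, y \in X \setminus S : d(y,s) \le t \text{ for all } s \in S \,\},
\]
via the correspondence $\widetilde{S} \leftrightarrow \widetilde{S} \setminus S$ between proper supersets $\widetilde{S} \supsetneq S$ of diameter $t$ and non-empty subsets of $Y_S$ of diameter at most $t$.

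Using the hypothesis applied to $S$, fix a special center $c = c_S \in X$ satisfying $d(c,s) \le r_t < t$ for all $s \in S$, and $d(c,c') \le t$ for every other center $c'$ of a radius-$r_t$ ball containing $S$. Provided $c \notin S$ (which should hold once $t_0$ is chosen large enough, since $c \in S$ would force $\diam(S) \le 2r_t$), the point $c$ lies in $Y_S$ automatically. The goal is then to show that $c$ is a cone point of $\Rips_t(Y_S) = \dclk(S)$, that is, that $d(c,y) \le t$ for every $y \in Y_S$, yielding at once that $\dclk(S)$ is a cone on $c$ and hence contractible.

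To establish the cone-point inequality at a fixed $y \in Y_S$, note that $S \cup \{y\}$ has diameter exactly $t$: it contains $S$, which already has $\diam(S) = t$, and every new pair satisfies $d(y,s) \le t$. Applying the hypothesis to $S \cup \{y\}$ yields a radius-$r_t$ ball $B(c_y, r_t) \supseteq S \cup \{y\}$; in particular $c_y$ is a center of a radius-$r_t$ ball containing $S$, so the ``within $t$'' clause applied to $S$ forces $d(c,c_y) \le t$, while $y \in B(c_y, r_t)$ gives $d(c_y, y) \le r_t$.

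The step I anticipate to be the main obstacle is extracting the tight bound $d(c,y) \le t$ from this data: a direct triangle inequality through $c_y$ only produces $d(c,y) \le t + r_t$, so the ``within $t$'' clause must be used more delicately. I expect the resolution to come either from an iterated application of the hypothesis---for instance, applied also to the diameter-$t$ set $S \cup \{c\}$ to pin $c_y$ within $r_t$ rather than merely within $t$ of $c$---or by replacing the direct cone argument with a Morse-theoretic deformation on $\Rips_t(Y_S)$ itself, using the height function $T \mapsto \max_{y \in T} d(c,y)$, whose sublevel set at height $t$ is the cone $\Rips_t(Y_S \cap B(c,t))$ on $c$ and whose descending links above $t$ are simplices of the form $T \setminus \{y^*\}$ (with $y^*$ the unique top-height vertex after a generic tiebreaker), hence contractible.
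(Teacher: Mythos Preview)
Your identification $\dclk(S)\cong\Rips_t(Y_S)$ is correct and elegant, but the argument breaks down after that, and neither of your proposed fixes repairs it.

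First, the dismissal of the case $c\in S$ is wrong: $c\in S$ only gives $\diam(S)\le 2r_t$, and nothing in the hypotheses forces $2r_t<t$ (only $r_t<t$), so this case genuinely occurs. The paper handles it separately, by showing that $\dflk(S)$ (not $\dclk(S)$) is contractible: the poset map $S'\mapsto S'\cup\{c\}$ conically contracts $\dflk(S)$ onto the star of $\{c\}$, since $d(c,s)\le r_t<t$ for all $s\in S$ keeps $\diam(S'\cup\{c\})<\diam(S)$.

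Second, and more seriously, even when $c\notin S$ you cannot prove that $c$ is a cone point of $\Rips_t(Y_S)$, as you yourself note. The ``moreover'' hypothesis only controls $d(c,c')$ for \emph{centers} $c'$ of radius-$r_t$ balls containing $S$, not for arbitrary $y\in Y_S$, and your $Y_S$ is strictly larger than the set of centers. Your first proposed fix (iterating the hypothesis on $S\cup\{c\}$) does not close the gap: the special center produced for $S\cup\{c\}$ need not be $c$, nor need it bound $d(c,y)$. Your second fix (Morse theory on $\Rips_t(Y_S)$ via $T\mapsto\max_{y\in T}d(c,y)$) runs into the same obstruction one level down: when $T$ is a singleton the descending face link is empty, so you must analyze the descending coface link, which is again a Rips complex on a subset of $Y_S$ with no apparent cone point.

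The paper's resolution is to work with the smaller set $Y\subseteq Y_S$ of \emph{centers} of radius-$r_t$ balls containing $S$. One cannot simply restrict, since $\dclk(S)$ genuinely equals $\Rips_t(Y_S)$; instead one covers $\dclk(S)$ by the contractible stars $Z_y=\st(S\cup\{y\})$ for $y\in Y$. The key point is that these stars do cover: any $\widetilde S\supsetneq S$ with $\diam(\widetilde S)=t$ lies in some $r_t$-ball by hypothesis, and that ball's center lies in $Y$ since $S\subseteq\widetilde S$. Nonempty finite intersections $Z_{y_1}\cap\cdots\cap Z_{y_k}$ are again stars (of $S\cup\{y_1,\dots,y_k\}$), hence contractible, and occur exactly when $\diam\{y_1,\dots,y_k\}\le t$. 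The nerve lemma then gives $\dclk(S)\simeq\Rips_t(Y)$, and now the ``moreover'' hypothesis says precisely that $\Rips_t(Y)$ is a cone on the special center.
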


\begin{proof}
By Corollary~\ref{cor:morse_rips}, which applies since $X$ satisfies (*), it suffices to prove that $\dlk(S)$ is contractible for all $S$ with $\diam(S)=t>t_0$. First suppose that there exists $y\in S$ such that $S$ lies in the ball of radius $r_t$ centered at $y$. Now we claim that $\dflk(S)$ is contractible. Let $S'\subseteq S$ be in $\dflk(S)$, so $\diam(S')<\diam(S)$. Since $d(y,s)\le r_t<t$ for all $s\in S$, we know that $\diam(S'\cup\{y\})<\diam(S)$, so $S'\cup\{y\}$ is still in $\dflk(S)$. Thus, by standard tools, e.g., \cite[Subsection~1.3]{quillen78}, $S'\mapsto S'\cup\{y\}$ induces a homotopy equivalence from $\dflk(S)$ to the star of $\{y\}$ in $\dflk(S)$, which is contractible. Now suppose that no such $y$ can be found in $S$. By our hypothesis there does still exist $y\in X$ such that $S$ lies in the ball of radius $r_t$ centered at $y$, but now every such $y$ must lie outside $S$. We claim that $\dclk(S)$ is contractible. Let $\emptyset\ne Y\subseteq X$ be the set of all $y$ such that $S$ lies in the ball of radius $r_t$ centered at $y$. For each $y\in Y$, let $Z_y$ be the star of $S\cup\{y\}$ in $\dclk(S)$, i.e., the subcomplex of all $\widetilde{S}\supsetneq S$ with $\diam(\widetilde{S})=\diam(S)$ and $y\in\widetilde{S}$. For any $\widetilde{S}$ in $\dclk(S)$, applying the hypothesis to $\widetilde{S}$ we know it lies in a ball of radius $r_t$ centered at some element, and since $S\subseteq \widetilde{S}$, this element must lie in $Y$. This shows that the $Z_y$ cover all of $\dclk(S)$. Being a star, each $Z_y$ is contractible. We have that $Z_{y_1}\cap\cdots\cap Z_{y_k}$ is non-empty if and only if $\diam(S\cup\{y_1,\dots,y_k\})\le t$, if and only if $\diam\{y_1,\dots,y_k\}\le t$, and in this case the intersection is the star of $S\cup\{y_1,\dots,y_k\}$ in $\dclk(S)$, hence is contractible. By a standard nerve lemma, we conclude that $\dclk(S)$ is homotopy equivalent to the nerve of its covering by the $Z_y$, which is isomorphic to $\Rips_t(Y)$. By the ``moreover'' part of the hypothesis in the statement of the theorem, there exists $y_0\in Y$ such that $d(y_0,y)\le t$ for all $y\in Y$, so $\Rips_t(Y)$ is a cone on some $y_0\in Y$, hence contractible.
\end{proof}

\begin{corollary}\label{cor:Zn}
The group $\Z^n$ with the standard word metric satisfies the hypotheses of Theorem~\ref{thrm:cible} using $t_0=n^2+n-1$. Hence $\Rips_t(\Z^n)$ is contractible for all $t\ge n^2+n-1$.
\end{corollary}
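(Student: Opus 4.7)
The plan is to apply Theorem~\ref{thrm:cible} with $X = \Z^n$ and $t_0 = n^2+n-1$. First, $\Z^n$ satisfies (*): the $\ell^1$ word metric is integer valued, and any $S \subseteq \Z^n$ with $\diam_1(S) \le t$ lies in an $\ell^1$-ball of radius $t$ about any of its points, giving $|S| \le |B_t(0)\cap \Z^n|$, a finite quantity depending only on $t$ and $n$; this provides a valid choice of $n_t$.

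The main work is, for each $t > t_0$ and $S \subseteq \Z^n$ with $\diam_1(S) = t$, to produce an integer $c \in \Z^n$ with $\|c - s\|_1 \le t-1$ for all $s \in S$, and then a canonical choice $c_0$ close to every other valid center. My plan is to fix a diametric pair $s^-, s^+ \in S$, translate so $s^- = 0$, and apply coordinate reflections (isometries of $\ell^1$) so that $s^+ = v = (v_1,\ldots,v_n)$ has $v_i \ge 0$ and $\sum_i v_i = t$. For existence, my candidate is the ``balanced midpoint'' $c$ with $c_i = \lfloor v_i/2 \rfloor$, which automatically gives $\|c - s^\pm\|_1 \le \lceil t/2 \rceil \le t-1$. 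For any other $s \in S$, I would decompose $\|c - s\|_1 = \sum_i |c_i - s_i|$ into an ``in-box'' part (bounded by $\lceil v_i/2\rceil$ in each coordinate $i$ with $s_i \in [0,v_i]$) and an ``excess'' part $e_i$ (measuring how far $s_i$ lies outside $[0,v_i]$), controlling the total excess by $\sum_i e_i \le t/2$ via $\|s - s^-\|_1 + \|s - s^+\|_1 \le 2t$. The naive combined bound $\|c-s\|_1 \le t + O(n)$ must then be improved to $\le t-1$, either by a small integer perturbation of $c$ or by ruling out the equality cases through combinatorial constraints forced on $S$; the quadratic threshold $t \ge n^2+n$ is meant to arise from precisely this combinatorial step.

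For the moreover condition, the specific center $c_0 = v - e_j$ for a suitably chosen coordinate $j$ with $v_j \ge 1$ should serve as the distinguished center: since $\|c_0 - s^+\|_1 = 1$, for any other valid $c' \in Y$ the triangle inequality yields $\|c_0 - c'\|_1 \le \|c_0 - s^+\|_1 + \|s^+ - c'\|_1 \le 1 + (t-1) = t$, as required. Verifying that $c_0$ itself lies in $Y$ (i.e.\ that some $j$ works for every $s \in S$) reduces to a case analysis on the signs of $(v-s)_j$ and uses the same combinatorial input as the existence step. The main obstacle is the sharp quantitative existence argument: the naive balanced-midpoint estimate is off by $O(n)$, and tightening it to the quadratic threshold requires delicately exploiting the diameter constraint on $S$ — this is presumably where the improvement over Virk's cubic bound in~\cite{virk25} comes from.
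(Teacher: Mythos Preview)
Your proposal is a plan, not a proof, and the missing step is precisely the heart of the matter. You correctly compute that the balanced midpoint $c$ satisfies $\|c-s\|_1 \le (t+n)/2 + E$ with $E\le t/2$, giving only $\|c-s\|_1 \le t + n/2$. You then assert that ``the quadratic threshold $t\ge n^2+n$ is meant to arise from precisely this combinatorial step,'' but you never perform that step, and it is not at all clear that the midpoint-of-a-diametric-pair heuristic can be salvaged to give this particular bound. The two estimates you are combining can be nearly simultaneously tight, and there is no evident mechanism by which a threshold on $t$ of order $n^2$ would repair an additive error of order $n$. Your ``moreover'' argument has the same defect: you propose $c_0 = v - e_j$, observe that $\|c_0 - s^-\|_1 = t-1$ is already extremal, and then defer the verification that some single $j$ handles every $s\in S$ to ``the same combinatorial input as the existence step,'' which you have not supplied.

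The paper's argument is structurally different and bypasses these difficulties. Rather than anchoring at a diametric pair, it works in $\R^n$: for $|S|\le n+1$ it takes the center of mass $x_0=\frac{1}{|S|}\sum_{s\in S}s$, so that $d(s_0,x_0)\le \frac{|S|-1}{|S|}\,t \le \frac{n}{n+1}\,t$ for every $s_0\in S$, and then rounds to $y_0\in\Z^n$ at cost $\le n/2$. This yields $r_t = tn/(n+1)+n/2$, and the inequality $r_t<t$ is exactly $t\ge n^2+n$, which is where the quadratic threshold comes from. For $|S|>n+1$ it invokes Helly's theorem on the convex sets $C_s = B_{tn/(n+1)}(s)\cap \mathrm{conv}(S)$, the $(n{+}1)$-wise intersections being nonempty by the small-$S$ case. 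The ``moreover'' clause then falls out for free: since $x_0$ (or the Helly point $x$) lies in $\mathrm{conv}(S)$, and any competing center $y$ has $\mathrm{conv}(S)\subseteq B_{r_t}(y)$, one gets $d(y,y_0)\le r_t + n/2 \le t$ directly, with no case analysis on coordinates.
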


\begin{proof}
Since $\Z^n$ satisfies (*), we are allowed to use Theorem~\ref{thrm:cible}. View $\Z^n$ inside the space $\R^n$. The word metric on $\Z^n$ coincides with the $\ell^1$-metric on $\R^n$, so we will always consider this metric, call it $d$. For all $t\ge n^2+n$, let
\[
r_t\coloneqq tn/(n+1) + n/2\text{.}
\]
Since $t\ge n^2+n$, we have $r_t\le t - n/2$, so in particular $r_t<t$. Now let $S\subseteq \Z^n$ with $\diam(S)=t$. We need to verify the hypotheses of Theorem~\ref{thrm:cible}. First suppose $|S|\le n+1$. Let $x_0 \in \R^n$ be the center of mass of $S$, i.e.,
\[
x_0\coloneq \frac{1}{|S|}\sum\limits_{s\in S} s\text{,}
\]
and let $y_0$ be any element of $\Z^n$ within distance at most $n/2$ of $x_0$. Then for any $s_0\in S$ we have $d(s_0,y_0)\le n/2 + \frac{1}{|S|}\sum\limits_{s\in S}d(s_0,s) \le n/2 + \frac{1}{|S|}(0+(|S|-1)t)\le n/2 + tn/(n+1) = r_t$. We conclude that $S$ lies in the ball of radius $r_t$ centered at $y_0$. Now let $y\in\Z^n$ be any other element such that $S$ lies in the ball of radius $r_t$ centered at $y$. Then $d(y,y_0)\le n/2 + \frac{1}{|S|}\sum\limits_{s\in S}d(y,s) \le n/2 + r_t \le t$. This confirms the hypotheses of Theorem~\ref{thrm:cible} for all $S$ with $|S|\le n+1$. Now suppose $|S|>n+1$. The proof of this case is inspired by the proof of Proposition~2.12 in \cite{amir85}. For each $s\in S$ let $C_s\subseteq \R^n$ be the intersection of the ball of radius $tn/(n+1)$ centered at $s$ with the convex hull of $S$. Since balls are convex, each $C_s$ is convex. For every $T\subseteq S$ with $|T|\le n+1$, by the proof of the previous case we have that the $C_t$ for all $t\in T$ have non-empty intersection in $\R^n$. Now Helly's theorem says that the $C_s$ for all $s\in S$ have non-empty intersection in $\R^n$. Say $x$ is a point in this intersection, and let $y_0$ be any element of $\Z^n$ within distance at most $n/2$ of $x$. Now $S$ lies in the ball of radius $tn/(n+1)$ centered at $x$ and hence in the ball of radius $r_t$ centered at $y_0$. Finally, let $y\in\Z^n$ be any other element such that $S$ lies in the ball of radius $r_t$ centered at $y$. The convex hull of $S$ also lies in this ball, hence $x$ lies in this ball, i.e., $d(y,x)\le r_t$, and so $d(y,y_0)\le r_t + n/2 \le t$. This confirms the hypotheses of Theorem~\ref{thrm:cible} for all $S$, and we are done.
\end{proof}

\bibliographystyle{alpha}

\end{document}